\DeclareMathAlphabet{\mathcal}{U}{BOONDOX-cal}{m}{n}
\SetMathAlphabet{\mathcal}{bold}{U}{BOONDOX-cal}{b}{n}
\DeclareMathAlphabet{\mathbcal} {U}{BOONDOX-cal}{b}{n}
\renewcommand{\leq}{\leqslant}
\renewcommand{\geq}{\geqslant}
\newcommand\vH{\mathcal{v\!H}}
\theoremstyle{plain}
\newtheorem{theorem}{Theorem}
\newtheorem{lemma}{Lemma}
\newtheorem{corollary}{Corollary}
\theoremstyle{definition}
\newtheorem{question}{Question}
\begin{document}

\title[A virt.\@ 2-step nilpotent group with polynomial geodesic growth]{A virtually 2-step nilpotent group \\with polynomial geodesic growth}
\author[A. Bishop]{Alex Bishop}
\address{Section de math\'ematiques,
Universit\'e de Gen\`eve,
rue du Conseil-Général 7-9,
1205 Gen\`eve, Switzerland}
\email{Alexander.Bishop@unige.ch}
\urladdr{alexbishop.github.io}

\author[M. Elder]{Murray Elder}
\address{School of Mathematical and Physical Sciences, University of Technology Sydney, Ultimo NSW 2007 Australia}
\email{murray.elder@uts.edu.au}
\urladdr{sites.google.com/site/melderau}

\thanks{Research supported by Australian Research Council grant DP160100486 and an Australian Government Research Training Program Scholarship.}

\begin{abstract}
A direct consequence of Gromov's theorem is that only virtually nilpotent groups may have polynomial geodesic growth.
However, until now the only known examples of groups with polynomial geodesic were virtually abelian.
In this note we furnish an example of a virtually 2-step nilpotent group having polynomial geodesic growth with respect to a certain finite generating set.
\end{abstract}

\subjclass[2020]{20F65, 20K35, 68Q45}

\keywords{Geodesic growth; discrete Heisenberg group}

\maketitle
\section*{Introduction}

The \emph{geodesic growth} function for a finitely-generated group with respect to a finite (monoid) generating set $S$ counts the number of geodesic words over $S$ with a given upper bound on their length.
Notice that this is bounded from below by the volume growth function which instead counts the number of elements which can be represented by such words.

Bridson, Burillo, \v Suni\'c and the second author~\cite{BBES} investigated groups for which this function is polynomial, building on work of Shapiro~\cite{Pascal}.
In particular, they showed that if a nilpotent group is not virtually cyclic then it has exponential geodesic growth with respect to all finite generating sets.
They also gave an example of a virtually $\mathbb{Z}^2$ group having polynomial geodesic growth with respect to a certain generating set, and provided a sufficient condition for a virtually abelian group to have polynomial geodesic growth.

The first author extended this work by characterising the geodesic growth of virtually abelian groups as \textit{D-finite} (or \emph{holonomic}) for every generating set~\cite{Alex-VirtAbel}.
In particular, this shows that the geodesic growth of a virtually abelian group may only be exponential, or both polynomial and rational.

Here we take the next step, by furnishing the first example of a virtually 2-step nilpotent group having polynomial geodesic growth.
This group contains the integral 3-dimensional Heisenberg group of index $2$. 

Our proof relies on a fact contained in work of Blach\`ere~\cite{blachere2003} that for the integral 3-dimensional Heisenberg group with respect to the (standard) generating set $\{a, a^{-1}, b, b^{-1}\}$,  every element has a geodesic representative which ``switches'' between  $a^{\pm 1}$ letters and $b^{\pm 1}$ letters at most five  times (see Lemma~\ref{lem:heisenberg-geodesic}).
We exploit this somewhat surprising fact to construct a generating set for our example.

Our result opens the door to the  
possibility that some construction of a virtually nilpotent group could have intermediate geodesic growth with respect to some generating set. It also raises the question of whether polynomial geodesic growth is restricted to virtually nilpotent groups of step at most two, or if some construction works for higher steps.

\section{Virtually Heisenberg Group}

Let $G$ be a group with a finite generating set $X$. Then, for each word $w = w_1 w_2 \cdots w_k \in X^*$ we write $|w|_X = k$ for its \emph{word length}, and $\overline{w} \in G$ for the element corresponding to the word $w$. We write $w^R=w_k  \cdots w_2w_1$ for the \emph{reverse} of $w$.
For each element $g \in G$ we write $\ell_X(g) = \min\{|w|_X \colon \overline{w} = g\}$ for the \emph{length} of an element with respect to the generating set $X$.
A word $w \in X^*$ is a \emph{geodesic} if $\ell_X(\overline{w}) = |w|_X$.
Then,
\[\gamma_X(n) = \{ w \in X^* \colon \ell_X(\overline{w}) = |w|_X \leq n \}\] is the \emph{geodesic growth function} of $G$ with respect to $X$.

Consider the discrete Heisenberg group
\[
	\mathcal{H} =
	\left\langle
		a,b
		\mid
		[a,[a,b]] = [b,[a,b]] = 1
	\right\rangle
\]
where $[a,b] = a b a^{-1} b^{-1}$.
We follow the convention of Blach\`ere~\cite{blachere2003} and write $(x,y,z) \in \mathcal{H}$ for the element corresponding to the
 word $[a,b]^z b^y a^x$.
 
We define the virtually Heisenberg group
\begin{equation}\label{eq:presentation-1}
	\vH =
	\left\langle
		a,b,t
	\mid
		[a,[a,b]] = [b,[a,b]] = t^2 = 1,\ a^t = b
	\right\rangle.
\end{equation}
We see that $S = \{a,a^{-1},t\}$ is a generating set for $\vH$ as after a Tietze transform to remove the generator $b$ we may obtain the presentation
\begin{equation}\label{eq:presentation-2}
	\vH =
	\left\langle
		a,t
	\mid
		[a,[a,a^t]] = [a^t,[a,a^t]] = t^2 = 1
	\right\rangle.
\end{equation}
We then observe that the relation $[a^t,[a,a^t]]=1 $ 
 is a consequence of the relations $[a,[a,a^t]] = 1$ and $t^2=1$ and thus we have the presentation
\begin{equation}\label{eq:presentation-3}
	\vH =
	\left\langle
		a,t
	\mid
		[a,[a,a^t]] = t^2 = 1
	\right\rangle.
\end{equation}
To see this, one may verify that 
 \begin{align*}
(atata^{-1}ta^{-1})[a,[a,a^t]]^{-1}(atata^{-1}ta^{-1})^{-1} \\= 
(atata^{-1}ta^{-1})[a,[a,a^t]]^{-1}(atata^{-1}ta^{-1}) \\= tat atata^{-1}ta^{-1}tata^{-1}ta^{-1} \\=[a^t,[a,a^t]].
\end{align*}

We provide a partial view of the Cayley graph of $(\vH,S)$ in Figure~\ref{fig:HeisCG}. Informally, one may think of this group as two copies of $\mathcal H$ glued with a ``twist'' by $t$ edges. Our construction is patently inspired by Cannon's virtually-$\mathbb Z^2$ example with interesting geodesic behaviour \cite{Ereg, NS}.

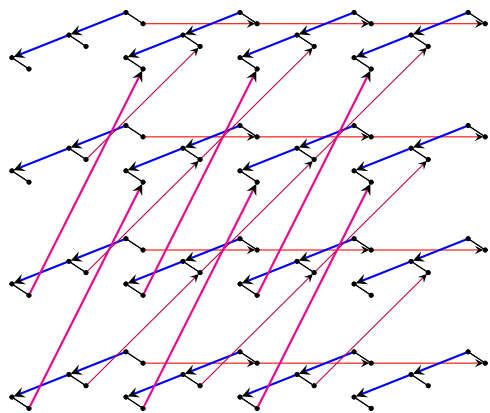
\begin{figure}[ht!]
\centering
\begin{tikzpicture}[scale=1.5,
every node/.style={draw,shape=circle,minimum size=.5mm,inner sep=0pt,outer sep=0pt,fill=black}, >=stealth]

\tikzmath{
	integer \AMax, \BMax, \CMax;
	\AMax = 2;
	\BMax = 3;
	\CMax = 3;
}

\coordinate (x) at (-.5,-.2);
\coordinate (y) at (1,0);
\coordinate (z) at (0,1);
\coordinate (w) at ($-.4*(y)-.5*(x)$);


\foreach \a in {0,...,\AMax} {
	\foreach \b in {0,...,\BMax} {
		\foreach \c in {0,...,\CMax} {
			\node (g\a_\b_\c_0) at ($\a*(x) + \b*(y) + \c*(z)$) {};
			\node (g\a_\b_\c_1) at ($\a*(x) + \b*(y) + \c*(z) + (w)$) {};
			\draw [] (g\a_\b_\c_0) --  (g\a_\b_\c_1); 
		}
	}
}


\tikzset{every path/.style={draw=red}}

\draw [->] (g0_0_0_0) -- (g0_1_0_0);
\draw [->] (g0_1_0_0) -- (g0_2_0_0);
\draw [->] (g0_2_0_0) -- (g0_3_0_0);

\draw [->] (g0_0_1_0) -- (g0_1_1_0);
\draw [->] (g0_1_1_0) -- (g0_2_1_0);
\draw [->] (g0_2_1_0) -- (g0_3_1_0);

\draw [->] (g0_0_2_0) -- (g0_1_2_0);
\draw [->] (g0_1_2_0) -- (g0_2_2_0);
\draw [->] (g0_2_2_0) -- (g0_3_2_0);

\draw [->] (g0_0_3_0) -- (g0_1_3_0);
\draw [->] (g0_1_3_0) -- (g0_2_3_0);
\draw [->] (g0_2_3_0) -- (g0_3_3_0);


\tikzset{every path/.style={draw=blue,thick}}

\foreach \b in {0,...,\BMax} {
	\foreach \c in {0,...,\CMax} {
		\draw [->] (g0_\b_\c_1) -- (g1_\b_\c_1);
		\draw [->] (g1_\b_\c_1) -- (g2_\b_\c_1);
	}
}


\tikzset{every path/.style={draw=purple}}

\draw [->] (g1_0_0_0) -- (g1_1_1_0);
\draw [->] (g1_1_0_0) -- (g1_2_1_0);
\draw [->] (g1_2_0_0) -- (g1_3_1_0);

\draw [->] (g1_0_1_0) -- (g1_1_2_0);
\draw [->] (g1_1_1_0) -- (g1_2_2_0);
\draw [->] (g1_2_1_0) -- (g1_3_2_0);

\draw [->] (g1_0_2_0) -- (g1_1_3_0);
\draw [->] (g1_1_2_0) -- (g1_2_3_0);
\draw [->] (g1_2_2_0) -- (g1_3_3_0);


\tikzset{every path/.style={}}

\foreach \a in {0,...,\AMax} {
	\foreach \b in {0,...,\BMax} {
		\foreach \c in {0,...,\CMax} {
			\node (g\a_\b_\c_0) at ($\a*(x) + \b*(y) + \c*(z)$) {};
			\node (g\a_\b_\c_1) at ($\a*(x) + \b*(y) + \c*(z) + (w)$) {};
			\draw [] (g\a_\b_\c_0) --  (g\a_\b_\c_1); 
		}
	}
}

\tikzset{every path/.style={draw=magenta, thick}}

\draw [->] (g2_0_0_0) -- (g2_1_2_0);
\draw [->] (g2_1_0_0) -- (g2_2_2_0);
\draw [->] (g2_2_0_0) -- (g2_3_2_0);

\draw [->] (g2_0_1_0) -- (g2_1_3_0);
\draw [->] (g2_1_1_0) -- (g2_2_3_0);
\draw [->] (g2_2_1_0) -- (g2_3_3_0);


\end{tikzpicture}
\caption{Cayley graph for $\vH$ with respect to the generating set $S$ where the undirected edges are labelled by $t$ and directed edges labelled by $a$.}\label{fig:HeisCG}
\end{figure}

Our goal is to show that any geodesic of $\vH$ with respect to the generating set $S$ can contain at most $7$ instances of the letter $t$.
From this we are able to place a polynomial upper bound on the geodesic growth function of $\vH$.
To do this, we first study geodesics of the discrete Heisenberg group with respect to the generating set $X = \{a,a^{-1},b,b^{-1}\}$.

Blach\`ere~\cite{blachere2003} provided explicit formulae for the length of elements in $\mathcal{H}$, with respect the generating set $X$, by constructing a geodesic from a given input word.
The following lemma is implicit in Blach\`ere's  work.

\begin{lemma}\label{lem:heisenberg-geodesic}
	Each element $(x,y,z) \in \mathcal{H}$ has a geodesic representative with respect to the generating set $X = \{a,a^{-1},b,b^{-1}\}$ of the form
	\[
		a^{\alpha_1}
		b^{\beta_1}
		a^{\alpha_2}
		b^{\beta_2}
		a^{\alpha_3}
		b^{\beta_3}
	\quad
	\mathrm{or}
	\quad
		b^{\beta_1}
		a^{\alpha_1}
		b^{\beta_2}
		a^{\alpha_2}
		b^{\beta_3}
		a^{\alpha_3}
	\]
	where each $\alpha_i,\beta_j \in \mathbb{Z}$.
\end{lemma}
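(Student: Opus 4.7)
The approach I would take is to exploit the classical ``area'' interpretation of the Heisenberg group. Writing $c = [a,b]$, the quotient $\mathcal{H} \twoheadrightarrow \mathcal{H}/\langle c\rangle \cong \mathbb{Z}^2$ sends each word $w$ over $X$ to a lattice path $P(w)$ starting at the origin. If $w$ represents the element $(x,y,z)$, then $P(w)$ ends at $(x,y)$ and the third coordinate $z$ equals the signed lattice area between $P(w)$ and the reference ``L-path'' $a^x b^y$. Consequently, $\ell_X(x,y,z)$ is the minimum length of a lattice path from $(0,0)$ to $(x,y)$ enclosing signed area $z$, a discrete isoperimetric problem whose solution is essentially rectangular.

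Blach\`ere's contribution is to turn this observation into explicit length formulae by exhibiting geodesics of a particular geometric shape: a (possibly degenerate) rectangle that accounts for most of the enclosed area, together with a monotone staircase connecting a corner of that rectangle to the endpoint. The rectangular loop contributes four collinear runs (two in the $a$-direction and two in the $b$-direction, alternating), and the staircase contributes at most one further $a$-run and one further $b$-run. Reading off the corresponding word, one obtains at most six alternating blocks of $a^{\pm}$ and $b^{\pm}$; whether the first block is in the $a$-direction or the $b$-direction depends on which corner of the rectangle is chosen as the starting point, and swapping this choice swaps between the two forms in the statement.

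Concretely, my plan is: \emph{(i)} formalise the area interpretation and reduce the problem to a discrete isoperimetric one; \emph{(ii)} reduce to the case $x,y,z \geq 0$ using the symmetries $a \leftrightarrow a^{-1}$, $b \leftrightarrow b^{-1}$, $a \leftrightarrow b$, each of which permutes $X$ and therefore preserves geodesic length; \emph{(iii)} in that case, read off from Blach\`ere's formula the side-lengths of the bounding rectangle and the residual staircase, and assemble the explicit word $a^{\alpha_1} b^{\beta_1} a^{\alpha_2} b^{\beta_2} a^{\alpha_3} b^{\beta_3}$, where $(\alpha_1,\beta_1)$ moves to a corner of the rectangle, $(\alpha_2,\beta_2)$ closes the loop (with negative entries), and $(\alpha_3,\beta_3)$ completes the journey to $(x,y)$; \emph{(iv)} verify that the length of this word matches the value given by Blach\`ere, so that it is a geodesic.

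The main obstacle I expect is the case analysis. Blach\`ere's formulae split into regimes depending on how large $|z|$ is relative to $|x|\cdot|y|$: in the ``small $z$'' regime the geodesic is already a monotone staircase occupying only two of the six slots, so the remaining exponents are zero and the claim is trivial; in the ``large $z$'' regime a genuine loop is needed, and care is required to check that in every sub-case one can orient the loop so as to obtain the required alternating pattern beginning with either $a$ or $b$. The delicate boundary cases, where the loop just barely fits around the endpoint, will need the most attention, but in each of them an explicit six-block word of the stated form can be written down and its length verified against Blach\`ere's formula.
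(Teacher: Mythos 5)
Your overall strategy is the same as the paper's: reduce by symmetries of the generating set and then read the six-block form off Blach\`ere's explicit geodesic representatives. The area/isoperimetric interpretation is good motivation but does no independent work here, since in the end you (like the paper) must defer to Blach\`ere's case-by-case formulae; in particular, your assertion that the residual staircase contributes ``at most one further $a$-run and one further $b$-run'' is not an a priori geometric fact --- an area-optimal path could in principle change direction many times --- it is precisely the content of the lemma and is only established by the explicit words Blach\`ere exhibits.

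There is, however, a genuine error in your step \emph{(ii)}: you cannot reduce to $x,y,z\geq 0$. The available symmetries couple the sign of $z$ to the signs of $x$ and $y$: inverting $a$ sends $(x,y,z)$ to $(-x,y,-z)$, inverting $b$ sends it to $(x,-y,-z)$, and the $a\leftrightarrow b$ swap (which must be combined with word reversal to act as $(x,y,z)\mapsto(y,x,z)$; as a plain substitution it sends $(x,y,z)$ to $(y,x,xy-z)$) exchanges $x$ and $y$ while fixing $z$. Acting on sign patterns, the orbit of $(+,+,-)$ never reaches $(+,+,+)$, so an entire family of elements escapes your normalisation. The correct reduction, which the paper follows from Blach\`ere, is to $x,z\geq 0$ and $-x\leq y\leq x$, after which the case $y<0$ must be treated separately. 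This is not a cosmetic omission: the $y<0$ regime is exactly where Blach\`ere's geodesics use all six blocks and are \emph{not} of rectangle-plus-corner shape (e.g.\ $a^{x-n}b^{-k}a^{-1}b^{k-n-1}a^{n+1}b^{n+1+y}$), so a proof that only inspects the all-nonnegative octant would miss the cases where the conclusion is tightest. With the reduction corrected and the $y<0$ formulae checked explicitly, your argument becomes the paper's proof.
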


\begin{proof}
We see that our lemma holds in the case of $(0,0,0) \in \mathcal{H}$ as the empty word $\varepsilon \in S^*$ is such a geodesic.
In the remainder of this proof, we assume that $(x,y,z) \neq (0,0,0)$.
Following Blach\`ere~\cite[p.~22]{blachere2003} we reduce this proof to the case where $x,z \geq 0$ and $-x \leq y \leq x$ as follows.

Let $\tau \colon X^* \to X^*$ be the monoid isomorphism defined such that $\tau(a^k)=b^k$ and $\tau(b^k)=a^k$ for each $k \in \mathbb Z$.
If $w \in X^*$ is a word as described in the lemma statement with $\overline{w} = (x,y,z)$, then $w'=  \tau(w^R)$ is also in the form described in the lemma statement and $\overline{w'} = (y,x,z)$.
Moreover, we see that $\tau(w^R)$ is a geodesic if and only if $w$ is a geodesic.
Defining the monoid isomorphisms $\varphi_a,\varphi_b\colon X^* \to X^*$ by $\varphi_a(a^k) = a^{-k}$, $\varphi_a(b^k) = b^k$, and $\varphi_b(a^k) = a^{k}$, $\varphi_b(b^k) = b^{-k}$ for each $k \in \mathbb{Z}$, we see that if $w \in X^*$ is a geodesic representative for $(x,y,z) \in \mathcal{H}$, then $\varphi_a(w)$, $\varphi_b(w)$ and $\varphi_a(\varphi_b(w))$ are geodesics for $(-x,y,-z)$, $(x,-y,-z)$ and $(-x,-y,z)$, respectively, and each such word is in the form as described in the lemma statement.
From application of the above transformations, we may assume without loss of generality that $x,z \geq 0$ and $-x \leq y \leq x$.

Let $h = (x,y,z) \in \mathcal{H}$, then from \cite[Theorem~2.2]{blachere2003} we have the following formulae for the length $\ell_X(h)$ and (most importantly for us) geodesic representative for $h$.

\begin{itemize}
	\item[I.]
	If $y \geq 0$, then we have the following cases.
	\begin{itemize}
		\item[I.1.]
		If $x < \sqrt{z}$, then $\ell_X(h) = 2\lfloor 2\sqrt{z} \rfloor - x - y$
		and $h$ has a geodesic representative given by $b^{y-y'} S_z a^{x-x'}$ where $x',y'$ are the values given by $\overline{S_z} = (x',y',z)$ (cf.~\cite[p.~32]{blachere2003}), where $S_z$ is as follows.
		
		\begin{itemize}
			\item
			If $z = (n+1)^2$ for some $n \in \mathbb{N}$, then $S_z = a^{n+1} b^{n+1}$;
			\item 
			if there exists a $k \in \mathbb{N}$ with $1 \leq k \leq n$ such that $z = n^2 + k$, then let $S_z = a^k b a^{n-k} b^n$;
			\item
			otherwise, there exists some $k \in \mathbb{N}$ with $1 \leq k \leq n$ such that $z = n^2+n+k$ and we have $S_z = a^k b a^{n+1-k} b^n$.
		\end{itemize}

		\item[I.2.]
		If $x \geq \sqrt{z}$, then we have the following two cases:
		\begin{itemize}
			\item[I.2.1] $xy \geq z$, then $\ell_X(h) = x+y$, otherwise
			\item[I.2.2] $xy \leq z$, then $\ell_X(h) = 2 \lceil z/x \rceil + x - y$;
		\end{itemize}
		and in both cases, the word $b^{y-u-1} a^v b a^{x-v} b^u$ is a geodesic for $h$ where $0 \leq u$, $0 \leq v < x$ and $z = ux+v$ (cf.~\cite[p.~24,\,32,\,33]{blachere2003}).
	\end{itemize}

	\item[II.]
	If $y < 0$, then we have the following cases.
	\begin{itemize}
		\item[II.1.] If $x \leq \sqrt{z - xy}$, then $\ell_X(h) = 2\lceil 2\sqrt{z-xy}\rceil - x + y$.
		Let $n = \lceil \sqrt{z-xy}\rceil-1$.
		Then
		\begin{itemize}
			\item 
			there is either some $k \in \mathbb{N}$ with $1 \leq k \leq n$ such that we have $z-xy = n^2+k$, and $h$ has  $a^{x-n} b^{-n-1} a^k b a^{n-k} b^{n+y}$ as a geodesic representative; or
			\item
			there is some $k \in \mathbb{N}$ with $0 \leq k \leq n$ such that we have $z-xy = (n+1)^2-k$ and $a^{x-n} b^{-k} a^{-1} b^{k-n-1} a^{n+1} b^{n+1+y}$ is a geodesic representative for $h$
			(cf.~\cite[p.~24]{blachere2003}\footnote{Note that in \cite{blachere2003} there is an error in the second case.}).
		\end{itemize}
		\item[II.2.] If $x \geq \sqrt{z - xy}$, then $\ell_X(h) = 2 \lceil z/x \rceil + x - y$ and 
		$h$ has  a geodesic representative of $b^{y-u-1} a^v b a^{x-v} b^u$
		where $u,v \geq 0$, $v < x$ and $z = ux+v$ (cf.~\cite[p.~24,\,33]{blachere2003}).
	\end{itemize}
\end{itemize}
Notice that in each of the above cases, we have our desired result.
\end{proof}

From this lemma, we  have the following result.

\begin{corollary}\label{cor:max-7-t}
	If $w \in S^*$ is a geodesic of $\vH$ with respect to the generating set $S = \{a,a^{-1},t\}$, then $w$  contains at most 7 instances of the letter $t$.
\end{corollary}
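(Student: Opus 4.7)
The plan is to assume $w$ is an $S$-geodesic containing $k$ copies of $t$ and show that $k \geq 8$ permits a strictly shorter representative, contradicting the geodesic hypothesis. Since $t^2 = 1$ forbids the factor $tt$ in any geodesic, I would first write
\[
  w \;=\; a^{\alpha_0}\, t\, a^{\alpha_1}\, t\, a^{\alpha_2}\, \cdots\, t\, a^{\alpha_k},
\]
so that $|w|_S = k + \sum_{i=0}^k |\alpha_i|$. Using $tat = b$ (equivalent to $a^t = b$ since $t^{-1} = t$), I would collapse consecutive $t$'s in pairs: for $k = 2m$ even this gives $\overline{w} = a^{\alpha_0} b^{\alpha_1} a^{\alpha_2} \cdots a^{\alpha_{2m}} \in \mathcal{H}$, and for $k = 2m+1$ odd I would additionally push the unpaired terminal $t$ to the right via $ta^n = b^n t$, obtaining $\overline{w} = h \cdot t$ with $h = a^{\alpha_0} b^{\alpha_1} \cdots b^{\alpha_{2m+1}} \in \mathcal{H}$. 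Either way the derived alternating $X$-word certifies $\ell_X(\,\cdot\,) \leq \sum |\alpha_i|$.

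Next, I would apply Lemma~\ref{lem:heisenberg-geodesic} to obtain an $X$-geodesic $u$ for that $\mathcal{H}$-element, with at most three $a$-blocks and three $b$-blocks. Converting $u$ back to an $S$-word via the substitution $b^\beta = (ta^\beta t)$ inserts exactly two $t$'s per $b$-block, yielding an $S$-representative of the $\mathcal{H}$-element of length $\ell_X + 6$ using at most six $t$'s. In the even case this is already a replacement for $w$, of length at most $\sum|\alpha_i| + 6$; so if $k \geq 8$ then $|w|_S > \sum|\alpha_i| + 6$ is a strict decrease, contradicting geodesicity, forcing $k \leq 6$ in the even case.

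In the odd case, I must append a trailing $t$ to represent $h \cdot t$. If $u$ has Blach\`ere's first form (ending in a $b$-block) the terminal $t$ from the $b$-conversion cancels the appended $t$ via $t^2 = 1$, giving an $S$-word of length $\ell_X(h) + 5$ with five $t$'s; if $u$ has the second form (ending in an $a$-block) no cancellation occurs and the result has length $\ell_X(h) + 7$ with seven $t$'s. In either subcase the new $S$-word has length at most $\sum|\alpha_i| + 7$, so $k \geq 9$ is strictly worse than the replacement and we conclude $k \leq 7$. The main delicate step is this asymmetry in the odd case: keeping track of which end of Blach\`ere's word the spare $t$ lands next to, and recognising that only form two uses the full seven $t$'s (form one benefits from a lucky cancellation), is exactly what forces the statement's bound of $7$ rather than $6$.
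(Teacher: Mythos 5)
Your proposal is correct and follows essentially the same route as the paper: both arguments rest on Lemma~\ref{lem:heisenberg-geodesic} together with the substitution $b^{\beta} \leftrightarrow t a^{\beta} t$ to replace a $t$-heavy word by a strictly shorter one using at most six (or, in your odd case, seven) instances of $t$. The only difference is bookkeeping: the paper sidesteps your even/odd parity split by replacing the factor of $w$ that ends at the eighth $t$, which has an even number of $t$'s and hence already represents an element of $\mathcal{H}$.
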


\begin{proof}
Let $w \in S^*$ be a word containing $8$ instances of $t$ of the form
\[
	w
	=
	t 
	a^{m_1} t
	a^{n_2} t 
	a^{m_2} t
	a^{n_3} t 
	a^{m_3} t
	a^{n_4} t 
	a^{m_4} t,
\] where $n_i,m_i\in \mathbb Z$, and notice that $\overline{w}$ belongs to the subgroup $\mathcal{H}$.
The Tietze transform given by $b = tat$ which we applied to obtain the presentation  (\ref{eq:presentation-2}) from (\ref{eq:presentation-1}) yields an automorphism $\varphi \colon \vH \to \vH$ given by $\varphi(a) = a$, $\varphi(t) = t$, $\varphi(b) = tat$, and since 
$t^2 = 1$ 
we have $\varphi(b^k) = ta^kt$ for  $k \in \mathbb{Z}$.
Let $X = \{a,a^{-1},b,b^{-1}\}$ be a generating set for the subgroup $\mathcal{H}$.
Then from the word $w \in S^*$ we may construct a word 
\[
	w_2
	=
	b^{m_1}
	a^{n_2}
	b^{m_2}
	a^{n_3}
	b^{m_3}
	a^{n_4}
	b^{m_4}\in X^*
\]
where $\overline{w_2} = \overline{w}$ since $\varphi(w_2) = w$.
Moreover, $|w|_S = |w_2|_X + 8$.

From Lemma~\ref{lem:heisenberg-geodesic}, we know that there is a word $w_3 \in X^*$, with  $\overline{w_3}=\overline{w_2}$ and $|w_3|_X \leq |w_2|_X$, of the form
\[
	w_3
	=
	a^{\alpha_1}
	b^{\beta_1}
	a^{\alpha_2}
	b^{\beta_2}
	a^{\alpha_3}
	b^{\beta_3}
\ \ \text{or}\ \ 
	w_3
	=
	b^{\beta_1}
	a^{\alpha_1}
	b^{\beta_2}
	a^{\alpha_2}
	b^{\beta_3}
	a^{\alpha_3}
\] where $\alpha_i,\beta_i\in\mathbb Z$ (possibly zero).
We then see that $\overline{w}$ can be represented by a word of the form
\[
	w_4
	=
	a^{\alpha_1} t
	a^{\beta_1} t
	a^{\alpha_2} t
	a^{\beta_2} t
	a^{\alpha_3} t
	a^{\beta_3} t
\ \ \text{or}\ \ 
	w_4
	=
	t
	a^{\beta_1} t
	a^{\alpha_1} t
	a^{\beta_2} t
	a^{\alpha_2} t
	a^{\beta_3} t
	a^{\alpha_3}
\]
where \[|w_4|_S \leq  |w_3|_X + 6 \leq  |w_2|_X + 6 < |w_2|_X + 8 = |w|_S.\]

Then $w$ cannot be a geodesic as we have a strictly shorter word $w_4$ that represents the same element.
Thus, a geodesic of $\vH$ with respect to $S = \{a,a^{-1},t\}$ can contain at most $7$ instances of the letter $t$ as we can replace any subword with $8$ instances of $t$ with a strictly shorter word containing at most $7$ instances of $t$.
\end{proof}

From this we obtain the following.

\begin{theorem}\label{thm:main}
	The geodesic growth function of $\vH$ with respect to $S = \{a,a^{-1},t\}$ is bounded from above by a polynomial of degree $8$.
\end{theorem}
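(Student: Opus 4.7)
The plan is to leverage Corollary~\ref{cor:max-7-t} and reduce the problem to a counting exercise on words of a very constrained form. Since every geodesic word $w \in S^\ast$ contains at most $7$ occurrences of $t$, I would describe the shape of an arbitrary geodesic and then enumerate the possibilities combinatorially, tracking how the degree of the bounding polynomial arises.

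Concretely, fix an integer $j \in \{0, 1, \dots, 7\}$ and suppose $w$ is a geodesic of length $m \leq n$ containing exactly $j$ copies of $t$. Then $w$ has the shape
\[
  w \;=\; a^{\epsilon_0 k_0}\, t\, a^{\epsilon_1 k_1}\, t\, \cdots\, t\, a^{\epsilon_j k_j},
\]
with $k_0,\dots,k_j \geq 0$, $\epsilon_i \in \{+1,-1\}$, and $k_0 + k_1 + \cdots + k_j + j = m$. I would first observe that within any maximal block of $a^{\pm 1}$ letters, the signs must agree: otherwise $w$ contains $a a^{-1}$ or $a^{-1} a$, and a strictly shorter word represents $\overline{w}$, contradicting geodesicity. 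Similarly $tt$ cannot appear as a factor, since $t^2 = 1$, which only tightens the count (though this refinement is not needed for the bound).

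With the structure pinned down, the enumeration is straightforward. For fixed $j$ and $m$, the number of nonnegative compositions $(k_0, \dots, k_j)$ of $m - j$ is $\binom{m}{j}$, and the number of sign assignments $(\epsilon_0,\dots,\epsilon_j)$ is at most $2^{j+1}$. Summing over $j \in \{0, \dots, 7\}$ and $m \leq n$ gives
\[
  \gamma_S(n) \;\leq\; \sum_{m=0}^{n} \sum_{j=0}^{7} 2^{j+1} \binom{m}{j} \;\leq\; 2^{9} \cdot 8 \cdot \binom{n+1}{8},
\]
using the hockey-stick identity $\sum_{m=0}^{n} \binom{m}{j} = \binom{n+1}{j+1}$ and the fact that $\binom{n+1}{j+1} \leq \binom{n+1}{8}$ for $j \leq 7$. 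The right-hand side is a polynomial in $n$ of degree $8$, giving the desired bound.

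There is no real obstacle here: all the geometric content was absorbed into Lemma~\ref{lem:heisenberg-geodesic} and Corollary~\ref{cor:max-7-t}, so the theorem follows from a routine counting argument. The only point requiring minimal care is justifying that geodesics have sign-monochromatic $a$-blocks, which is immediate from the definition of a geodesic.
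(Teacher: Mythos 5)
Your proposal is correct and follows essentially the same route as the paper: invoke Corollary~\ref{cor:max-7-t} to pin down the form $a^{m_1}ta^{m_2}t\cdots ta^{m_{k+1}}$ with $k\leq 7$, then count sign choices ($2^{k+1}$) and placements of the $t$'s ($\binom{r}{k}$), arriving at the same double sum $\sum_{r=0}^{n}\sum_{k=0}^{7}2^{k+1}\binom{r}{k}$. Your extra remarks (sign-monochromatic $a$-blocks, the hockey-stick identity making the degree-$8$ bound explicit) are harmless refinements of the paper's argument rather than a different approach.
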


\begin{proof}
From Corollary~\ref{cor:max-7-t}, we see that any geodesic of $\vH$, with respect to the generating set $S$, must have the form
\[
	w = a^{m_1} t a^{m_2} t \cdots t a^{m_{k+1}}
\]
where $k \leq 7$ and each $m_i \in \mathbb{Z}$.
Then with $k$ fixed and $r = |w|_S$, we see that there are at most $2^{k+1}$ choices for the sign of $m_1,m_2,\ldots,m_{k+1}$, and  at most $\binom{r}{k}$ choices for the placement of the $t$'s in $w$.
Thus the geodesic growth function $\gamma_S(n)$ has an upper bound given by
\[
	\gamma_S(n)
	\leq
	\sum_{r=0}^n \sum_{k=0}^{7} 2^{k+1} \binom{r}{k}
\]
which give the degree 8 polynomial upper bound.
\end{proof}

\section{Open questions and further work}

The key to our proof of Theorem~\ref{thm:main} is the geodesic representatives given in the work of Blach\`ere. In particular, we exploit the fact that each element of $\mathcal H$ has some geodesic of a particularly special form to prove polynomial geodesic growth for $\vH$.
For this reason, our proof does not immediately appear to generalise to other virtually nilpotent groups (or for that matter to different generating sets of $\vH$).
In light of this, we pose the following question.
\begin{question}[Characterising polynomial geodesic growth]
	For which $k\in \mathbb N$ is there a virtually $k$-step nilpotent group with polynomial geodesic growth with respect to some finite generating set?
\end{question}

It follows from \cite[Theorem~2]{bass1972} that the usual growth rate of a virtually nilpotent group is polynomial of integer degree.
Moreover, from \cite{Alex-VirtAbel} it is known that if a virtually abelian group has polynomial geodesic growth, then it must be of integer degree (since the geodesic growth series is rational in this case).
It is not known if there is a virtually nilpotent group with polynomial geodesic growth of a non-integer degree.
Based on experimental results we conjecture that the geodesic growth rate of $\vH$ with respect to the generating set $S$ can be bounded from above and below by polynomials of degree six (cf.\ the usual growth is polynomial of degree four). 
The corresponding code and first 645 terms of the geodesic growth function are available from \cite{githubcode}.

\begin{question}[Degree of polynomial geodesic growth]
	Is there a group with polynomial geodesic growth of a non-integer degree?
\end{question}

The first author showed that the geodesic growth series for virtually abelian groups is D-finite (holonomic) in the exponential case, and rational in the polynomial case \cite{Alex-VirtAbel}.
It follows from P\'olya-Carlson Theorem~\cite{carlson1921} that a geodesic growth sequence of sub-exponential growth is either rational, or its associated generating function has the unit circle as its natural boundary.
(In particular, such a sequence is either rational or is not D-finite.)
It was shown by Duchin and Shapiro~\cite{MoonS} that the usual growth of $\mathcal H$ is rational for all generating sets. 
Preliminary investigation of the data in \cite{githubcode} leads us to suspect that the geodesic growth sequence for $(\vH,S)$ is not rational, which would mean it is not D-finite.

Finally, we recall a motivating question from \cite{BBES}. 
\begin{question}
	Is there a group with intermediate geodesic growth?
\end{question}
From \cite{BBES,Alex-VirtAbel} it is known that if such a group exists, then it cannot be nilpotent or virtually abelian.

\bibliographystyle{plain}
\bibliography{adm-virtHeis}

\end{document}